\numberwithin{equation}{section}
\newtheorem{Theorem}{Theorem}[section]
\newtheorem{Proposition}[Theorem]{Proposition}
\newtheorem{cor}[Theorem]{Corollary}
\newtheorem{lemma}[Theorem]{Lemma}
\theoremstyle{remark}
\newtheorem{Definition}[Theorem]{Definition}
\begin{document}

\title{A note on left $\phi$-biflat Banach algebras}

  \author[A. Sahami]{A. Sahami}
  \email{a.sahami@ilam.ac.ir}

  \address{Department of Mathematics
    Faculty of Basic Sciences Ilam University P.O. Box 69315-516 Ilam,
    Iran.}

 \author[M. Rostami]{M. Rostami}
\email{mross@aut.ac.ir}

\address{Faculty of Mathematics and Computer Science
	Amirkabir University of Technology
	424 Hafez Avenue, 15914 Tehran
	Iran}

 \author[A. Pourabbas]{A. Pourabbas}
\email{arpabbas@aut.ac.ir}

\address{Faculty of Mathematics and Computer Science
	Amirkabir University of Technology
	424 Hafez Avenue, 15914 Tehran
	Iran}

\keywords{Left $\phi$-biflat,  Segal algebra, Semigroup algebra, Locally compact group.}

\subjclass[2010]{ Primary 46M10 Secondary,  43A07, 43A20.}

\maketitle
%-------------------------------------------------------------

%%%%%%%%%%%%%%%%%%%%%%%%%%%%%%%%%%%%%%%%%%%%%%%%%%%%%%%%%%%%%%%%%%%%%%%%%
\begin{abstract}
In this paper, we study the notion of $\phi$-biflatness for some Banach algebras, where $\phi$ is a non-zero multiplicative linear functional. We  show that the Segal algebra $S(G)$ is left $\phi$-biflat if and only if $G$ is amenable. Also we characterize left $\phi$-biflatness of semigroup algebra $\ell^{1}(S)$ in the term of biflatness, where $S$ is a Clifford semigroup. 
\end{abstract}
\section{Introduction and preliminaries}
A Banach algebra $A$ is called amenable, if there exists an element $M\in (A\otimes_{p}A)^{**}$ such that $a\cdot M=M\cdot a$ and $\pi^{**}_{A}(M)a=a$ for each $a\in A.$ It is well-known that an amenable Banach algebra has a bounded approximate identity. For the history of amenability see \cite{run}.

In homological theory, the notion of biflatness is an amenability-like property.
In fact a Banach algebra $A$ is biflat if there exists a Banach $A$-bimodule $\rho$ from $A$ into $(A\otimes_{p}A)^{**}$ such that $\pi^{**}_{A}\circ\rho(a)=a,$ for each $a\in A.$ It is well-known that a  Banach algebra $A$ with a bounded approximate identity is biflat if and only if $A$ is amenable.

Kanuith et. al. in \cite{kan}, defined a version of amenability with respect to a non-zero multiplicative functional $\phi$. Indeed a Banach algebra $A$ is called left $\phi$-amenable if there exists an element $m\in A^{**}$ such that $am=\phi(a)m$ and $\tilde{\phi}(m)=1$ for every $a\in A.$ We shall mention that  the Segal algebra $S(G)$ is left $\phi$-amenable if and only if $G$ is amenable, for further information see \cite{Ala},\cite{jav} and \cite{Hu}.

Motivated by these considerations, Essmaili et. al. in \cite{rost ch} defined a biflat-like property related to a multiplicative linear functional, they called it condition $W$ (which we call it here right $\phi$-biflatness). 
\begin{Definition}\cite{rost ch}
Let $A$ be a Banach algebra and $\phi\in\Delta(A)$. The Banach algebra $A$ is called left $\phi$-biflat (right $\phi$-biflat or satisfies condition $W$), if there exists a bounded linear map $\rho:A\rightarrow (A\otimes_{p}A)^{**}$
such that $$\rho(ab)=\phi(b)\rho(a)=a\cdot \rho(b)\quad (\rho(ab)=\phi(a)\rho(b)=\rho(a)\cdot b)$$
and $$\tilde{\phi}\circ\pi^{**}_{A}\circ\rho(a)=\phi(a),$$
for each $a,b\in A,$ respectively.
\end{Definition}
They showed that  a symmetric Segal algebra $S(G)$ (on a locally compact group $G$) is right $\phi$-biflat if and only if $G$ is amenable \cite[Theorem 3.4]{rost ch}. As a consequence of this result in \cite[Corollary 3.5]{rost ch} authors charactrized the right $\phi$-biflatness of Lebesgue-Fourier algebra $\mathcal{LA}(G)$, Weiner algebra $M_{1}$ and Feichtinger's Segal algebra $S_{0}(G)$ over a unimodular locally compact group.

In this paper, we extend   \cite[Theorem 3.4]{rost ch}  for any Segal algebra (in left $\phi$-biflat case). In fact we show that the Segal algebra $S(G)$ is left $\phi$-biflat if and only if $G$ is amenable. Using this tool we charactrize left $\phi$-biflatness of the
Lebesgue-Fourier algebra $\mathcal{LA}(G)$.
 Also we characterize left $\phi$-biflatness of second dual of Segal algebra $S(G)^{**}$ in the term of amenability $G.$ We  study left $\phi$-biflatness of some semigroup algebras.

We remark some standard notations and definitions that we shall need
in this paper. Let $A$ be a Banach algebra. If $X$ is a Banach
$A$-bimodule, then  $X^{*}$ is also a Banach $A$-bimodule via the
following actions
$$(a\cdot f)(x)=f(x\cdot a) ,\hspace{.25cm}(f\cdot a)(x)=f(a\cdot x ) \hspace{.5cm}(a\in A,x\in X,f\in X^{*}). $$

Throughout, the
character space of $A$ is denoted by $\Delta(A)$,  that is, all
non-zero multiplicative linear functionals on $A$. Let $\phi\in
\Delta(A)$. Then $\phi$ has a unique extension   $\tilde{\phi}\in\Delta(A^{**})$
which is defined by $\tilde{\phi}(F)=F(\phi)$ for every
$F\in A^{**}$.

Let $A$ be a  Banach algebra. The projective tensor product
$A\otimes_{p}A$ is a Banach $A$-bimodule via the following actions
$$a\cdot(b\otimes c)=ab\otimes c,~~~(b\otimes c)\cdot a=b\otimes
ca\hspace{.5cm}(a, b, c\in A).$$
 The product morphism $\pi_{A}:A\otimes_{p}A\rightarrow A$ is given by $\pi_{A}(a\otimes b)=ab,$ for every $a,b\in A.$
 Let $X$ and $Y$ be Banach $A$-bimodules. The map $T:X\rightarrow Y$ is called $A$-bimodule morphism, if
$$T(a\cdot x)=a\cdot T(x),\quad T(x\cdot a)=T(x)\cdot a,\qquad (a\in A,x\in X).$$

%------------------------------------------------------------------------------------------------------------------------------------------
%%%%%%%%%%%%%%%%%%%%%%%%%%%%%%%%%%%%%%%%%%%%%%%%%%%%%%%%%%%%%%%%%%%%%%%%%%%%%%%%%%%%%%%%%%%%%%%%%%%%%%%%%%%%%%%%%%%%%%%%%%%%%%%%%%%%%%%%%%%
%------------------------------------------------------------------------------------------------------------------------------------------
\section{Left $\phi$-biflatness}
In this section we give two criterion which show the relation of left $\phi$-biflatness
 and left $\phi$-amenability.
 \begin{lemma}\label{main}
	Suppose that $A$ is a left $\phi$-biflat Banach algebra with $\overline{A\ker\phi}^{||\cdot||}=\ker\phi.$ Then $A$ is left $\phi$-amenable.
\end{lemma}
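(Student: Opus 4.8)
The plan is to extract a left $\phi$-amenability element from $\rho$ by composing it with the product map. Set $\psi:=\pi^{**}_A\circ\rho\colon A\to A^{**}$, a bounded linear map; the normalization condition on $\rho$ says precisely that $\tilde{\phi}\circ\psi=\phi$. First I would translate the two module-type identities for $\rho$ into identities for $\psi$. From $\rho(ab)=\phi(b)\rho(a)$ one gets $\psi(ab)=\phi(b)\psi(a)$ at once. From $\rho(ab)=a\cdot\rho(b)$ one gets $\psi(ab)=a\cdot\psi(b)$, applying $\pi^{**}_A$ and using that $\pi_A$ is a left $A$-module morphism, hence so is its second adjoint $\pi^{**}_A$ with respect to the natural module action of $A$ on the biduals. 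This last point --- that passing to the bidual preserves the module-morphism property --- is the one place I would be careful with the bookkeeping, but it is standard.

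Next I would bring in the hypothesis $\overline{A\ker\phi}^{||\cdot||}=\ker\phi$. For any $a\in A$ and $c\in\ker\phi$, the first identity gives $\psi(ac)=\phi(c)\psi(a)=0$, so $\psi$ vanishes on the linear span $A\ker\phi$, and then by boundedness of $\psi$ it vanishes on $\overline{A\ker\phi}^{||\cdot||}=\ker\phi$.

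Finally, fix $b_0\in A$ with $\phi(b_0)=1$ and put $m:=\psi(b_0)\in A^{**}$. Then $\tilde{\phi}(m)=\tilde{\phi}(\psi(b_0))=\phi(b_0)=1$. For $a\in A$, decompose $ab_0=(ab_0-\phi(a)b_0)+\phi(a)b_0$, where $ab_0-\phi(a)b_0\in\ker\phi$ since $\phi(ab_0)=\phi(a)$; applying $\psi$ and using the previous paragraph yields $\psi(ab_0)=\phi(a)\psi(b_0)=\phi(a)m$. On the other hand, $a\cdot m=a\cdot\psi(b_0)=\psi(ab_0)$ by the second identity. Hence $a\cdot m=\phi(a)m$ for every $a\in A$, and together with $\tilde{\phi}(m)=1$ this shows that $A$ is left $\phi$-amenable. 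I do not expect a serious obstacle here; the only delicate point is keeping the two Arens-type module actions on $(A\otimes_p A)^{**}$ and $A^{**}$ straight, and the hypothesis $\overline{A\ker\phi}^{||\cdot||}=\ker\phi$ is what does the essential work, since without it one only recovers $a\cdot m=\psi(a)$ with $\tilde{\phi}\circ\psi=\phi$, rather than the eigenvalue equation $a\cdot m=\phi(a)m$.
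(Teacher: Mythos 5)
Your proof is correct, and it is a cleaner route than the one the paper takes, though the underlying strategy (kill $\ker\phi$, then evaluate at an element $b_0$ with $\phi(b_0)=1$ and use $ab_0-\phi(a)b_0\in\ker\phi$) is the same. The paper first pushes $\rho$ forward by $(id_A\otimes q)^{**}$ into $(A\otimes_p \frac{A}{\ker\phi})^{**}$, checks vanishing on $\ker\phi$ there, factors through the quotient to get $\overline{\xi}$, and only then collapses the second tensor factor with $(id_A\otimes\overline{\phi})^{**}$ to land in $A^{**}$; you instead collapse immediately by setting $\psi=\pi_A^{**}\circ\rho$ and work entirely in $A^{**}$, which eliminates the quotient bookkeeping and the identifications $\frac{A}{\ker\phi}\cong\mathbb{C}$, $A\otimes_p\frac{A}{\ker\phi}\cong A$. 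The two constructions produce different elements of $A^{**}$ in general ($(id_A\otimes\phi)$ versus $\pi_A$ applied to $\rho(b_0)$), but both satisfy $am=\phi(a)m$ and $\tilde\phi(m)=1$, since the normalization hypothesis $\tilde\phi\circ\pi_A^{**}\circ\rho=\phi$ is exactly adapted to your choice. The one point you flag as delicate --- that $\pi_A^{**}$ intertwines the left $A$-actions on $(A\otimes_pA)^{**}$ and $A^{**}$ --- is indeed the only thing to verify, and it holds because the second adjoint of a bounded left $A$-module morphism is again a left $A$-module morphism for the (natural, $A$-level) actions; the paper invokes the identical fact for $(id_A\otimes q)^{**}$. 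Your closing remark about the role of the hypothesis $\overline{A\ker\phi}^{\|\cdot\|}=\ker\phi$ is also accurate: without it one cannot upgrade $\psi(ab_0)=a\cdot\psi(b_0)$ to the eigenvalue equation.
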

\begin{proof}
	Let $A$ be left $\phi$-biflat. Then there exists a bounded linear map $\rho:A\rightarrow (A\otimes_{p}A)^{**}$ such that $\rho(ab)=a\cdot \rho(b)=\phi(b)\rho(a) $ and $\tilde{\phi}\circ\pi^{**}_{A}\circ\rho(a)=\phi(a)$ for all $a\in A.$ We finish the proof in three steps:
	
	Step1:There exists a bounded left $A$-module morphism $\xi:A\rightarrow (A\otimes_{p}\frac{A}{\ker\phi})^{**}$
	which $\xi(l)=0,$ for each $l\in\ker\phi.$
	To see this, we denote $id_{A}:A\rightarrow A$ for the identity map. Also we denote $q:A\rightarrow \frac{A}{\ker\phi}$ for the qoutient map. Put $\xi:=(id_{A}\otimes q)^{**}\circ\rho:A\rightarrow (A\otimes_{p}\frac{A}{\ker\phi})^{**}$, where $id_{A}\otimes q(a\otimes b)=id_{A}(a)\otimes q(b)$ for every $a,b\in A.$ Clearly $id_{A}\otimes q:A\otimes_{p}A\rightarrow A\otimes_{p}\frac{A}{\ker\phi}$ is a bounded left $A$-module morphism, it folloows that $(id_{A}\otimes q)^{**}$ is also 
	 a bounded left $A$-module morphism. So $\xi:A\rightarrow (A\otimes_{p}\frac{A}{\ker\phi})^{**}$ is a bounded left $A$-module morphism.	
	  Let $l$ be an arbitrary element of $\ker\phi$. Since $\overline{A\ker\phi}^{||\cdot||}=\ker\phi$, there exist two sequences $(a_{n})$ in $A$ and $(l_{n})$ in $\ker\phi$ such that $a_n l_{n}\xrightarrow{||\cdot||}l.$ $$\xi(l)=(id_{A}\otimes q)^{**}\circ\rho(l)=\lim_{n}(id_{A}\otimes q)^{**}\circ\rho(a_{n}l_{n})=\lim_{n}\phi(l_{n})(id_{A}\otimes q)^{**}\circ\rho(a_{n})=0,$$
	  the last equality holds because $(l_{n})$ is in $\ker\phi.$
	  
	  Step2:There exists a bounded left $A$-module morphism $\eta:\frac{A}{\ker\phi}\rightarrow A^{**}$ such that $\tilde{\phi}\circ \eta(a+\ker\phi)=\phi(a)$ for each $a\in A.$ To see this, in step1  we showed that $\xi(\ker\phi)=\{0\}$. It induces a map $\overline{\xi}:\frac{A}{\ker\phi}\rightarrow (A\otimes_{p}\frac{A}{\ker\phi})^{**}$ which is defined by $\overline{\xi}(a+\ker\phi)=\xi(a)$ for each $a\in A.$ Define $$\theta:=(id_{A}\otimes\overline{\phi})^{**}\circ\overline{\xi}:\frac{A}{\ker\phi}\rightarrow  (A\otimes_{p}\frac{A}{\ker\phi})^{**},$$
	  where $\overline{\phi}$ is a character on $\frac{A}{\ker\phi}$ given by $\overline{\phi}(a+\ker\phi)=\phi(a)$ for each $a\in A.$ Clearly $\theta$ is a bounded left $A$-module morphism. On the other hand we know that $\frac{A}{\ker\phi}\cong \mathbb{C}$  and $A\otimes_{p}\frac{A}{\ker\phi}\cong A$. Thus the composition of $\tilde{\phi}$ and $\theta$ can be defined. Since $$\tilde{\phi}\circ(id_{A}\otimes \overline{\phi})^{**}=(\phi\otimes \overline{\phi})^{**},\quad (\phi\otimes \overline{\phi})^{**}\circ\xi(a)=\tilde{\phi}\circ\pi^{**}_{A}\circ\rho(a),\qquad (a\in A),$$
	  we have 
	  	   \begin{equation*}
	  \begin{split}
	 \tilde{\phi}\circ\theta(a+\ker\phi)=\tilde{\phi}\circ(id_{A}\otimes \overline{\phi})^{**}\circ\overline{\xi}(a+\ker\phi)&=(\phi\otimes \overline{\phi})^{**}\circ\xi(a)\\
	 &=\tilde{\phi}\circ\pi^{**}_{A}\circ\rho(a)\\
	 &=\phi(a),
	  \end{split}
	  \end{equation*}
	  for each $a\in A.$
	  
	  Step3: We prove that $A$ is left $\phi$-amenable. To see that, choose an element $a_{0}$ in $A$ such that $\phi(a_{0})=1.$ Put $m=\theta(a_{0}+\ker\phi)\in A^{**}$. Since $aa_{0}-\phi(a)a_{0}\in\ker\phi,$ we have $aa_{0}+\ker\phi=\phi(a)a_{0}+\ker\phi.$ Consider $$am=a\theta(a_{0}+\ker\phi)=\theta(aa_{0}+\ker\phi)=\theta(\phi(a)a_{0}+\ker\phi)=\phi(a)\theta(a_{0}+\ker\phi)=\phi(a)m$$
	  and 
	  $$\tilde{\phi}(m)=\tilde{\phi}\circ \theta(a_{0}+\ker\phi)=\phi(a_{0})=1,$$
	  for every $a\in A.$ It implies that $A$ is left $\phi$-amenable.
\end{proof}
\begin{Theorem}\label{dual}
	Let $A$ be a Banach algebra with a left approximate identity and $\phi\in\Delta(A).$ Then $A^{**}$ is left $\tilde{\phi}$-biflat if and only if $A$ is left $\phi$-biflat.
\end{Theorem}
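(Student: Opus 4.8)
The plan is to prove the two implications by carrying the defining map back and forth along two canonical morphisms. Regard $A\subseteq A^{**}$ via the canonical embedding, so that $\tilde\phi$ restricts to $\phi$ on $A$ and $\tilde{\tilde\phi}$ restricts to $\tilde\phi$ on $A^{**}$, and write $\square$ for the Arens product of $A^{**}$. Taking second adjoints of the canonical map $A\otimes_{p}A\to A^{**}\otimes_{p}A^{**}$, $a\otimes b\mapsto a\otimes b$, gives a bounded $A^{**}$-bimodule morphism $J\colon (A\otimes_{p}A)^{**}\to (A^{**}\otimes_{p}A^{**})^{**}$ which intertwines $\pi^{**}_{A}$ and $\pi^{**}_{A^{**}}$ (under $A^{**}\subseteq A^{****}$). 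Dually, extending each bilinear form in $(A\otimes_{p}A)^{*}$ by its first Arens extension produces a bounded \emph{left} $A$-module morphism $\Phi\colon A^{**}\otimes_{p}A^{**}\to (A\otimes_{p}A)^{**}$ that extends $A\otimes_{p}A\hookrightarrow (A\otimes_{p}A)^{**}$ and satisfies $\pi^{**}_{A}\circ\Phi=\pi_{A^{**}}$. Throughout I will use the canonical norm-decreasing projection $P_{X}\colon X^{****}\to X^{**}$ (the adjoint of $X^{*}\hookrightarrow X^{***}$): it is weak$^{*}$--weak$^{*}$ continuous, is a bimodule morphism, is natural --- $\pi^{**}_{A}\circ P_{A\otimes_{p}A}=P_{A}\circ\pi^{****}_{A}$ --- and satisfies $\tilde\phi\circ P_{A}=\tilde{\tilde\phi}$.

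Assume $A$ is left $\phi$-biflat, witnessed by $\rho\colon A\to (A\otimes_{p}A)^{**}$. I would set $\rho_{1}:=J\circ\rho$ and then $\Psi:=P_{A^{**}\otimes_{p}A^{**}}\circ\rho_{1}^{**}\colon A^{**}\to (A^{**}\otimes_{p}A^{**})^{**}$, which is bounded, weak$^{*}$--weak$^{*}$ continuous, and restricts to $\rho_{1}$ on $A$. Because $J$ is an $A^{**}$-bimodule morphism intertwining the product morphisms and $\tilde{\tilde\phi}$ restricts to $\tilde\phi$, the identities $\Psi(ab)=\tilde\phi(b)\Psi(a)=a\cdot\Psi(b)$ and $\tilde{\tilde\phi}\circ\pi^{**}_{A^{**}}\circ\Psi(a)=\tilde\phi(a)$ hold for all $a,b\in A$. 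The normalization extends to all of $A^{**}$ at once, both sides being weak$^{*}$-continuous functionals. For the module identity I would argue in two stages: first fix $a\in A$ and let the second slot range over $A^{**}$, using weak$^{*}$--weak$^{*}$ continuity of $\mathbf b\mapsto a\,\square\,\mathbf b$ and of $Y\mapsto a\cdot Y$ on $(A^{**}\otimes_{p}A^{**})^{**}$, to obtain $\Psi(a\,\square\,\mathbf b)=\tilde\phi(\mathbf b)\Psi(a)=a\cdot\Psi(\mathbf b)$ for all $\mathbf b$; then let the first slot range over $A^{**}$. The equality $\Psi(\mathbf a\,\square\,\mathbf b)=\tilde\phi(\mathbf b)\Psi(\mathbf a)$ extends without difficulty, since $\mathbf a\mapsto\mathbf a\,\square\,\mathbf b$ is weak$^{*}$--weak$^{*}$ continuous; but its coincidence with $\mathbf a\cdot\Psi(\mathbf b)$ needs continuity of $\mathbf a\mapsto\mathbf a\cdot Y$, which is not automatic, and it is exactly here that the existence of a left approximate identity for $A$ is used --- through the structure it forces on $A^{**}$ (an identity-like element, or Cohen's factorization $A=A\cdot A$) --- to push the equality through to all $\mathbf a\in A^{**}$.

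Conversely, assume $A^{**}$ is left $\tilde\phi$-biflat, witnessed by $\Psi\colon A^{**}\to (A^{**}\otimes_{p}A^{**})^{**}$, and set $\rho:=P_{A\otimes_{p}A}\circ\Phi^{**}\circ\Psi|_{A}\colon A\to (A\otimes_{p}A)^{**}$. Then $\rho(ab)=\phi(b)\rho(a)$ is immediate from $\Psi(ab)=\tilde\phi(b)\Psi(a)$; the relation $\rho(ab)=a\cdot\rho(b)$ follows from $\Psi(a\,\square\,b)=a\cdot\Psi(b)$ and the fact that $\Phi$, hence $\Phi^{**}$ and $P_{A\otimes_{p}A}\circ\Phi^{**}$, are left $A$-module morphisms; and the normalization $\tilde\phi\circ\pi^{**}_{A}\circ\rho(a)=\phi(a)$ reduces, on chaining $\pi^{**}_{A}\circ P_{A\otimes_{p}A}=P_{A}\circ\pi^{****}_{A}$, $\pi^{****}_{A}\circ\Phi^{**}=\pi^{**}_{A^{**}}$ and $\tilde\phi\circ P_{A}=\tilde{\tilde\phi}$, to the hypothesis $\tilde{\tilde\phi}\circ\pi^{**}_{A^{**}}\circ\Psi(a)=\tilde\phi(a)=\phi(a)$. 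Here too the left approximate identity is available to legitimize the identifications of the $A$- and $A^{**}$-module actions used in the computation.

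The step I expect to be the genuine obstacle is not conceptual but the bookkeeping: verifying the intertwining properties of $\Phi$ and of $P_{X}$, keeping scrupulous track of which Arens product --- and which side of each module action --- is in force, and above all carrying out the second stage of the weak$^{*}$-continuity extension in the first implication, which is precisely the point at which the hypothesis that $A$ has a left approximate identity is really needed.
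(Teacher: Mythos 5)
Your converse direction ($A^{**}$ left $\tilde\phi$-biflat implies $A$ left $\phi$-biflat) is essentially sound: your $\Phi$ is precisely the map $\psi$ of Ghahramani--Loy--Willis that the paper also invokes, and composing with $P_{A\otimes_{p}A}$ and chasing the intertwining relations does produce a witness for left $\phi$-biflatness of $A$ directly (the paper instead takes a detour here, first extracting a left $\phi$-mean via the argument of Lemma \ref{main} and then rebuilding the structure map from it).

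The forward direction, however, has a genuine gap, and it sits exactly where you flagged it. Stage 1, and the extension of $\Psi(\mathbf a\,\square\,\mathbf b)=\tilde\phi(\mathbf b)\Psi(\mathbf a)$ in the first variable, are fine because the first Arens product is weak$^{*}$ continuous in its first argument. But the remaining identity $\tilde\phi(\mathbf b)\Psi(\mathbf a)=\mathbf a\cdot\Psi(\mathbf b)$ for all $\mathbf a\in A^{**}$ does not follow from weak$^{*}$ density: for fixed $Y$ the map $\mathbf a\mapsto\mathbf a\cdot Y$ on $(A^{**}\otimes_{p}A^{**})^{**}$ is not weak$^{*}$--weak$^{*}$ continuous, and the devices you gesture at to force the extension --- Cohen factorization, or a mixed/right identity in $A^{**}$ --- all require a \emph{bounded} approximate identity, whereas the theorem assumes only a (possibly unbounded) left approximate identity. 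Even granting a bounded one, the identity you need (taking $\tilde\phi(\mathbf b)=1$ it reads $\Psi(\mathbf a)=\mathbf a\cdot\Psi(\mathbf b)$ for every $\mathbf a\in A^{**}$) is a strong innerness statement that a density argument alone will not deliver. The paper sidesteps this entirely: the left approximate identity is used only to guarantee $\overline{A\ker\phi}^{\Vert\cdot\Vert}=\ker\phi$, whence by Lemma \ref{main} the algebra $A$ is left $\phi$-amenable; the Kaniuth--Lau--Pym transfer theorem then yields $m\in A^{****}$ with $\mathbf a\, m=\tilde\phi(\mathbf a)m$ for \emph{every} $\mathbf a\in A^{**}$, and the rank-one map $\mathbf a\mapsto\tilde\phi(\mathbf a)\,m\otimes m$ verifies all three defining identities pointwise, with no continuity issue to resolve. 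To repair your argument, replace stage 2 by this reduction to left $\phi$-amenability (equivalently, set $M:=\Psi(b_{0})$ for some $b_{0}\in A$ with $\phi(b_{0})=1$, convert it into a $\tilde\phi$-mean, and only then rebuild the structure map).
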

\begin{proof}
Suppose that $A^{**}$ is left $\tilde{\phi}$-biflat. Then there exists a bounded linear map $\rho:A^{**}\rightarrow (A^{**}\otimes_{p}A^{**})^{**}$	 such that $
\tilde{\tilde{\phi}}\circ\pi^{**}_{A^{**}}\circ\rho(a)=\tilde{\phi}(a)$ for all $a\in A^{**}.$ On the other hand, there exists a
bounded linear map $\psi:A^{**}\otimes_{p} A^{**}\rightarrow
(A\otimes_{p} A)^{**}$ such that for $a,b\in A$ and $m\in
A^{**}\otimes_{p} A^{**}$, the following holds;
\begin{enumerate}
	\item [(i)] $\psi(a\otimes b)=a\otimes b $,
	\item [(ii)] $\psi(m)\cdot a=\psi(m\cdot a)$,\qquad
	$a\cdot\psi(m)=\psi(a\cdot m),$
	\item [(iii)] $\pi_{A}^{**}(\psi(m))=\pi_{A^{**}}(m),$
\end{enumerate}
see \cite[Lemma 1.7]{gha}. Clearly $$\psi^{**}\circ\rho|_{A}:A\rightarrow (A\otimes_{p}A)^{**}$$ is a bounded linear map which 
$$\psi^{**}\circ\rho|_{A}(ab)=\phi(b)\psi^{**}\circ\rho|_{A}(a)=a\cdot \psi^{**}\circ\rho|_{A}(b)$$
and 
$$\tilde{\tilde{\phi}}\circ\pi^{****}_{A}\circ\rho(a)=\tilde{\phi}(a),\quad (a,b\in A)$$ 
Following the similar arguments as in the  previous lemma (step 1), we can find a bounded left $A$-module morphism $\xi:A\rightarrow (A\otimes_{p}\frac{A}{\ker\phi})^{****}$ such that $\xi(\ker\phi)=\{0\}.$ Now following the same course as in the previos lemma (step 2) we can find a bounded linear map $\theta:\frac{A}{\ker\phi}\rightarrow A^{****}$ such that $\tilde{\tilde{\phi}}\circ\theta(a+\ker\phi)=\phi(a)$ for each $a\in A.$ Choose $a_{0}$ in $A$ which $\phi(a_{0})=1.$ Set $m=\theta(a_{0}+\ker\phi)$. It is easy to see that $$am=\phi(a)m,\quad \tilde{\tilde{\phi}}(m)=1,\qquad (a\in A).$$ Applying Goldestine's theorem, we can find a bounded net $(m_{\alpha})$ in $A^{**}$ such that $am_{\alpha}-\phi(a)m_{\alpha}\xrightarrow{w^{*}} 0$ and $\tilde{\phi}(m_{\alpha})\rightarrow 1,$ for each $a\in A.$ On the other hand $(m_{\alpha})$ is a bounded net, therefore $(m_{\alpha})$ has a $w^{*}$-limit point, say $M$. It is easy to see that $aM=\phi(a)M$ and $\tilde{\phi}(M)=1$ for each $a\in A.$ Define $\eta:A\rightarrow (A\otimes_{p}A)^{**}$ by $\eta(a)=\phi(a)M\otimes M,$ for each $a\in A.$
It is easy to see that $\eta$ is a bounded linear map  such that $$\eta(ab)=a\cdot \eta(b)=\phi(b) \eta(a),\quad \tilde{\phi}\circ \pi^{**}_{A}\circ  \eta(a)=\phi(a)\quad (a,b\in A).$$
It follows that $A$ is left $\phi$-biflat.

Conversely, suppose that $A$ is left $\phi$-biflat. Since $A$ has a left approximate identity, we have $\overline{A\ker\phi}^{||\cdot||}=\ker\phi.$
So by previous lemma $A$ is left $\phi$-amenable. Applying \cite[Proposition 3.4]{kan} $A^{**}$ is left $\tilde{\phi}$-amenable. Thus there exists an element $m\in A^{****}$ such that $am=\phi(a)m$ and $\tilde{\tilde{\phi}}(m)=1$ for each $a\in A^{**}.$ Define $\gamma:A\rightarrow (A^{**}\otimes_{p}A^{**})^{**}$ by $\gamma(a)=\phi(a)m\otimes m,$ for each $a\in A.$
It is easy to see that $\gamma$ is a bounded linear map  such that $$\gamma(ab)=a\cdot \gamma(b)=\phi(b) \gamma(a),\quad \tilde{\phi}\circ \pi^{**}_{A^{**}}\circ  \gamma(a)=\phi(a)\quad (a,b\in A).$$
It follows that $A^{**}$ is left $\tilde{\phi}$-biflat. 
\end{proof}
%%%%%%%%%%%%%%%%%%%%%%%%%%%%%%%%%%%%%%%
%%%%%%%%%%%%%%%%%%%%%%%%%%%%%%%%%%%%%%%%
%%%%%%%%%%%%%%%%%%%%%%%%%%%%%%%%%%%%%%%%%
\section{Applications to  Banach algebras related to a locally compact group}
 A linear subspace $S(G)$ of $L^{1}(G)$ is said to be
a Segal algebra on $G$ if it satisfies the following conditions
\begin{enumerate}
	\item [(i)] $S(G)$ is  dense    in $L^{1}(G)$,
	\item [(ii)]  $S(G)$ with a norm $||\cdot||_{S(G)}$ is
	a Banach space and $|| f||_{L^{1}(G)}\leq|| f||_{S(G)}$ for every
	$f\in S(G)$,
	\item [(iii)] for $f\in S(G)$ and $y\in G$, we have $L_{y}(f)\in S(G)$ the map $y\mapsto L_{y} (f)$ from $G$ into $S(G)$ is continuous, where
	$L_{y}(f)(x)=f(y^{-1}x)$,
	\item [(iv)] $||L_{y}(f)||_{S(G)}=||f||_{S(G)}$ for every $f\in
	S(G)$ and $y\in G$.
\end{enumerate}
For various examples of Segal algebras, we refer the reader  to \cite{rei}.

It is well-known that $S(G)$ always has a left approximate
identity.
For a Segal algebra $S(G)$ it  has been shown that
$$\Delta(S(G))=\{\phi_{|_{S(G)}}|\phi\in\Delta(L^{1}(G))\},$$ see
\cite[Lemma 2.2]{Ala}.
\begin{Theorem}
	Let $G$ be a locally compact group. Then the following statements are equivallent:
	\begin{enumerate}
		\item [(i)] $S(G)^{**}$ is  left $\tilde{\phi}$-biflat,
		\item [(ii)]  $S(G)$ is left $\phi$-biflat,
		\item [(iii)] $G$ is an amenable group. 
	\end{enumerate}
\end{Theorem}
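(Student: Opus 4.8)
The plan is to prove the chain of equivalences $(i)\Leftrightarrow(ii)\Leftrightarrow(iii)$ by exploiting the structural facts already assembled in the paper. The equivalence $(i)\Leftrightarrow(ii)$ is essentially free: since every Segal algebra $S(G)$ always has a left approximate identity and $\phi\in\Delta(S(G))$ extends to $\tilde\phi\in\Delta(S(G)^{**})$, Theorem~\ref{dual} applied with $A=S(G)$ gives directly that $S(G)^{**}$ is left $\tilde\phi$-biflat if and only if $S(G)$ is left $\phi$-biflat. So the real content is $(ii)\Leftrightarrow(iii)$.

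For the direction $(iii)\Rightarrow(ii)$, I would argue that if $G$ is amenable then $L^1(G)$ is amenable in the classical sense, hence biflat, and in particular left $\phi$-biflat for every $\phi\in\Delta(L^1(G))$; one then transports this to the Segal algebra. Concretely, amenability of $G$ gives a bounded net in $L^1(G)$ (or in $L^1(G)\otimes_p L^1(G)$) implementing the required module map; since $S(G)$ sits densely inside $L^1(G)$ with $\|\cdot\|_{L^1}\le\|\cdot\|_{S(G)}$ and is an $L^1(G)$-bimodule, the map $\rho$ can be pulled back or restricted to produce a bounded linear $\rho_S:S(G)\to(S(G)\otimes_p S(G))^{**}$ with $\rho_S(ab)=\phi(b)\rho_S(a)=a\cdot\rho_S(b)$ and $\tilde\phi\circ\pi^{**}\circ\rho_S=\phi$. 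The cleanest implementation is to note that left $\phi$-amenability of $S(G)$ (which holds precisely when $G$ is amenable, by \cite{Ala}) produces an element $m\in S(G)^{**}$ with $am=\phi(a)m$, $\tilde\phi(m)=1$, and then define $\rho_S(a)=\phi(a)\,m\otimes m$, exactly as in the proof of Theorem~\ref{dual}; this is bounded, linear, and satisfies all three identities.

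For the harder direction $(ii)\Rightarrow(iii)$, the strategy is to invoke Lemma~\ref{main}. Since $S(G)$ always has a left approximate identity, one gets $\overline{S(G)\ker\phi}^{\|\cdot\|}=\ker\phi$ (this is the standard Cohen-type / approximate-identity argument: for $l\in\ker\phi$ and $(e_\alpha)$ a left bai, $e_\alpha l\to l$ and $e_\alpha l\in S(G)\ker\phi$ because $\ker\phi$ is a left ideal). Hence Lemma~\ref{main} applies and left $\phi$-biflatness of $S(G)$ yields left $\phi$-amenability of $S(G)$. Then one quotes the known result that a Segal algebra $S(G)$ is left $\phi$-amenable if and only if $G$ is amenable (this is stated in the introduction, citing \cite{Ala}, \cite{jav}, \cite{Hu}), which closes the loop. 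The main obstacle is making sure the chosen character $\phi$ on $S(G)$ is genuinely the restriction of a character on $L^1(G)$ so that the left $\phi$-amenability characterization of Segal algebras is applicable; this is guaranteed by \cite[Lemma 2.2]{Ala}, recalled just before the theorem, which identifies $\Delta(S(G))$ with restrictions of elements of $\Delta(L^1(G))$. A secondary point requiring care is verifying the density identity $\overline{S(G)\ker\phi}=\ker\phi$ rigorously, but the left approximate identity makes this routine.
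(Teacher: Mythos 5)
Your proposal is correct and takes essentially the same route as the paper: $(i)\Leftrightarrow(ii)$ via Theorem~\ref{dual} and the left approximate identity of $S(G)$, $(ii)\Rightarrow(iii)$ via Lemma~\ref{main} together with the characterization of left $\phi$-amenability of Segal algebras from \cite{Ala}, and $(iii)\Rightarrow(ii)$ by constructing $\rho(a)=\phi(a)\,m\otimes m$ from a left $\phi$-mean. Your first sketch of $(iii)\Rightarrow(ii)$ (transporting biflatness of $L^{1}(G)$ down to $S(G)$) would be delicate, since the inclusion $S(G)\hookrightarrow L^{1}(G)$ points the wrong way for pulling an element of $(L^{1}(G)\otimes_{p}L^{1}(G))^{**}$ back to $(S(G)\otimes_{p}S(G))^{**}$, but you correctly discard it in favor of the $m\otimes m$ construction, which is exactly what the paper does.
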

\begin{proof}
$(i)\Rightarrow(ii)$
Let $S(G)^{**}$ be left $\tilde{\phi}$-biflat. Since $S(G)$  has a left approximate identity. Then by Theorem \ref{dual}, $S(G)$ is left $\phi$-biflat.

$(ii)\Rightarrow(iii)$
Suppose that $S(G)$ is left $\phi$-biflat. Since $S(G)$  has a left approximate identity, $\overline{S(G)\ker\phi}^{||\cdot||}=S(G).$ Applying Lemma \ref{main}, follows that $S(G)$ is left $\phi$-amenable. Now by \cite[Corollary 3.4]{Ala} $G$ is amenable.

$(iii)\Rightarrow(i)$
Let $G$ be amenable. So by \cite[Corollary 3.4]{Ala} $S(G)$ is left $\phi$-amenable. Thus $S(G)$ is left $\phi$-biflat. Using Theorem \ref{dual}, $S(G)^{**}$ is left $\tilde{\phi}$-biflat.
\end{proof}
Let $G$ be a locally compact group. Define $\mathcal{LA}(G)=L^{1}(G)\cap A(G)$, where $A(G)$ is the Fourier algebra over $G$. For $f\in \mathcal{LA}(G)$ put 
$$|||f|||=||f||_{L^{1}(G)}+||f||_{A(G)},$$ with this norm and the convoloution product $\mathcal{LA}(G)$ becomes a Banach algebra called Lebesgue-Fourier algebra. In fact $\mathcal{LA}(G)$ is a Segal algebra in $L^{1}(G)$, see \cite{gha lau}.
Following corollary is an easy consequence of previous theorem:
\begin{cor}
	Let $G$ be a locally compact group. Then the following statements are equivallent:
	\begin{enumerate}
		\item [(i)] $\mathcal{LA}(G)^{**}$ is  left $\tilde{\phi}$-biflat,
		\item [(ii)]  $\mathcal{LA}(G)$ is left $\phi$-biflat,
		\item [(iii)] $G$ is an amenable group. 
	\end{enumerate}
\end{cor}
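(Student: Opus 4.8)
The plan is to deduce the corollary directly from the preceding theorem, using only the structural fact that $\mathcal{LA}(G)$ is a Segal algebra. First I would recall, as noted in the excerpt citing \cite{gha lau}, that $\mathcal{LA}(G) = L^1(G) \cap A(G)$ equipped with the norm $|||f||| = \|f\|_{L^1(G)} + \|f\|_{A(G)}$ and the convolution product is a Segal algebra in $L^1(G)$; in particular it satisfies conditions (i)--(iv) in the definition of a Segal algebra, so all hypotheses of the previous theorem apply with $S(G) = \mathcal{LA}(G)$.

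The main step is then simply to invoke the previous theorem verbatim: for the Segal algebra $\mathcal{LA}(G)$ and a character $\phi \in \Delta(\mathcal{LA}(G))$, the statements ``$\mathcal{LA}(G)^{**}$ is left $\tilde\phi$-biflat'', ``$\mathcal{LA}(G)$ is left $\phi$-biflat'', and ``$G$ is amenable'' are equivalent. This immediately gives the equivalence of (i), (ii), (iii) in the corollary. One small point worth making explicit is that $\mathcal{LA}(G)$ always has a left approximate identity, which is automatic for any Segal algebra as recalled in the excerpt, so the hypotheses needed for the application of Theorem~\ref{dual} inside the proof of the previous theorem are met.

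I do not anticipate any genuine obstacle here, since the corollary is stated to be ``an easy consequence of the previous theorem''. The only thing to be slightly careful about is the identification of the character space: by \cite[Lemma 2.2]{Ala} every character on a Segal algebra is the restriction of a character on $L^1(G)$, so the class of functionals $\phi$ to which the result applies is exactly $\Delta(\mathcal{LA}(G))$, and no compatibility issue arises. Thus the proof is a single line: apply the previous theorem to the Segal algebra $\mathcal{LA}(G)$.
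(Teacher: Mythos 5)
Your proposal is correct and is exactly the paper's argument: the paper records that $\mathcal{LA}(G)$ is a Segal algebra in $L^{1}(G)$ (citing \cite{gha lau}) and then states the corollary as an immediate consequence of the preceding theorem on Segal algebras. Your additional remarks on the left approximate identity and the identification of $\Delta(\mathcal{LA}(G))$ via \cite[Lemma 2.2]{Ala} are accurate and only make explicit what the paper leaves implicit.
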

Let $G$ be a locally compact group and let $\hat{G}$ be its dual
group, which consists of all non-zero continuous homomorphism
$\zeta:G\rightarrow \mathbb{T}$. It is well-known that
$\Delta(L^{1}(G))=\{\phi_{\zeta}:\zeta \in \hat{G}\}$, where
$\phi_{\zeta}(f)=\int_{G}\overline{\zeta(x)}f(x)dx$ and $dx$ is a
left Haar measure on $G$, for more details, see \cite[Theorem
23.7]{hew}.
\begin{cor}
Let $G$ be a locally compact group. Then the following statements are equivallent:
\begin{enumerate}
	\item [(i)] ${L^{1}(G)}^{**}$ is  left $\tilde{\phi}$-biflat,
	\item [(ii)]  $L^{1}(G)$ is left $\phi$-biflat,
	\item [(iii)] $G$ is an amenable group. 
\end{enumerate}
\end{cor}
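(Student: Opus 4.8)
The plan is to observe that $L^{1}(G)$ is itself a Segal algebra on $G$, so that the statement is merely the special case $S(G)=L^{1}(G)$ of the Theorem proved earlier in this section (the one asserting, for a Segal algebra $S(G)$, the equivalence of left $\tilde{\phi}$-biflatness of $S(G)^{**}$, left $\phi$-biflatness of $S(G)$, and amenability of $G$). Concretely, I would first verify the four defining conditions of a Segal algebra for $S(G):=L^{1}(G)$ equipped with its own norm $\|\cdot\|_{L^{1}(G)}$: density in $L^{1}(G)$ is trivial; the norm condition $\|f\|_{L^{1}(G)}\le\|f\|_{S(G)}$ holds with equality; the continuity of $y\mapsto L_{y}f$ from $G$ into $L^{1}(G)$ is the classical continuity of left translation on $L^{1}(G)$; and the isometry property $\|L_{y}f\|_{L^{1}(G)}=\|f\|_{L^{1}(G)}$ is immediate from left-invariance of Haar measure. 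In particular $L^{1}(G)$ has a left approximate identity, which is exactly the hypothesis used through Theorem \ref{dual}.

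Having identified $L^{1}(G)$ as a Segal algebra, I would then apply that Theorem with $S(G)$ replaced by $L^{1}(G)$: it yields directly that $L^{1}(G)^{**}$ is left $\tilde{\phi}$-biflat if and only if $L^{1}(G)$ is left $\phi$-biflat if and only if $G$ is amenable, which is precisely the asserted chain $(i)\Leftrightarrow(ii)\Leftrightarrow(iii)$. This is the same mechanism by which the corollary for $\mathcal{LA}(G)$ was obtained. Regarding the hypothesis $\phi\in\Delta(L^{1}(G))$, one may simply recall the description $\Delta(L^{1}(G))=\{\phi_{\zeta}:\zeta\in\hat{G}\}$ noted above, so no extra verification is needed there.

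There is no genuine obstacle here: the entire analytic content — the descent from left $\phi$-biflatness to left $\phi$-amenability via Lemma \ref{main}, the duality of Theorem \ref{dual}, and the equivalence of left $\phi$-amenability of $S(G)$ with amenability of $G$ from \cite[Corollary 3.4]{Ala} — is already in place. The only point one must not overlook is that $L^{1}(G)$ legitimately qualifies as a Segal algebra (the ``degenerate'' instance of the definition), after which the corollary is a one-line application of the preceding Theorem.
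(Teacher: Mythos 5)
Your proposal is correct and matches the paper's intent: the paper's proof is simply ``Clear,'' the point being precisely that $L^{1}(G)$ is itself a Segal algebra, so the preceding theorem applies verbatim. Your verification of the four Segal-algebra axioms for $L^{1}(G)$ just makes explicit what the paper leaves implicit.
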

\begin{proof}
	Clear.
\end{proof}
A discrete  semigroup $S$ is called  inverse semigroup, if for each $s\in S$
there exists an element $s^{*}\in S$ such that $ss^{*}s=s^{*}$ and
$s^{*}ss^{*}=s$.  There is a partial order on each
inverse semigroup $S$, that is,
$$s\leq t\Leftrightarrow s=ss^{*}t\quad (s,t \in S).$$
Let $(S,\leq)$ be an inverse semigroup. For each $s\in S$, set
$(x]=\{y\in S| \,y\leq x\}$. $S$ is called  uniformly locally finite if
$\sup\{|(x]|\,:x\in S\}<\infty$. Suppose that $S$ is an inverse
semigroup and $e\in E(S)$, where $E(S)$ is  the set of all idempotents of $S$. Then $G_{e}=\{s\in S|ss^{*}=s^{*}s=e\}$ is a
maximal subgroup of $S$ with respect to $e$.
An inverse semigroup $S$ is called
Clifford semigroup if for each $s\in S$ there exists $s^{*}\in S$
such that $ss^{*}=s^{*}s.$
See \cite{how} as a main reference of semigroup theory.
\begin{Proposition}
	Let $S=\cup_{e\in E(S)}G_{e}$ be a Clifford semigroup such that
	$E(S)$ is uniformly locally finite.
	Then the followings are equivalent:
	\begin{enumerate}
		\item [(i)] ${\ell^{1}(S)}^{**}$ is  left $\tilde{\phi}$-biflat, for each $\phi\in\Delta(\ell^{1}(S))$.
		\item [(ii)]  $\ell^{1}(S)$ is left $\phi$-biflat, for each $\phi\in\Delta(\ell^{1}(S))$.
		\item [(iii)] Each $G_{e}$ is an amenable group. 
		\item [(iv)] $\ell^{1}(S)$ is biflat.
	\end{enumerate}
\end{Proposition}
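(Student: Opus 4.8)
The plan is to establish the four-fold equivalence by running the cycle $(i)\Leftrightarrow(ii)\Leftrightarrow(iii)\Leftrightarrow(iv)$, with Lemma~\ref{main} and Theorem~\ref{dual} as the main engines and the decomposition of $S$ as a semilattice of the groups $G_{e}$ as the structural input. First I would dispose of $(i)\Leftrightarrow(ii)$: the algebra $\ell^{1}(S)$ always carries a left approximate identity (a standard fact for Clifford semigroup algebras), so Theorem~\ref{dual} applies verbatim and yields this equivalence with nothing further to check.

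For $(ii)\Rightarrow(iii)$ I would follow the pattern of the Segal algebra theorem above. A left approximate identity forces $\overline{\ell^{1}(S)\ker\phi}^{\,\|\cdot\|}=\ker\phi$ for each $\phi\in\Delta(\ell^{1}(S))$ (if $(e_{\alpha})$ is such an identity and $l\in\ker\phi$, then $e_{\alpha}l\in\ell^{1}(S)\ker\phi\subseteq\ker\phi$ and $e_{\alpha}l\to l$), so Lemma~\ref{main} shows $\ell^{1}(S)$ is left $\phi$-amenable for every such $\phi$. Now fix $e\in E(S)$. Since the idempotents of a Clifford semigroup are central, $\delta_{s}\mapsto\delta_{es}$ extends to a norm-decreasing surjective homomorphism $\ell^{1}(S)\to\ell^{1}(eS)$, and composing with the Rees-type quotient of $\ell^{1}(eS)$ by the closed ideal $\ell^{1}(\bigcup_{f<e}G_{f})$ exhibits $\ell^{1}(G_{e})$ as a quotient of $\ell^{1}(S)$ by a closed ideal. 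Pulling the augmentation character of $\ell^{1}(G_{e})$ back along this quotient map produces a $\phi_{e}\in\Delta(\ell^{1}(S))$, and since left $\phi$-amenability is inherited by quotients \cite{kan}, $\ell^{1}(G_{e})$ is left augmentation-amenable; by the group algebra case of \cite{kan} this means $G_{e}$ is amenable. This gives $(iii)$.

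The real content is carried by $(iii)\Leftrightarrow(iv)$ and $(iii)\Rightarrow(ii)$. For $(iii)\Leftrightarrow(iv)$ I would invoke the known description of biflatness of $\ell^{1}(S)$ for a Clifford semigroup $S=\bigcup_{e}G_{e}$ with $E(S)$ uniformly locally finite, namely that $\ell^{1}(S)$ is biflat precisely when each $G_{e}$ is amenable; this is the only input not already available in the paper, and it is exactly here that uniform local finiteness of $E(S)$ is used, since without it $\ell^{1}(S)$ cannot be biflat. For $(iii)\Rightarrow(ii)$, amenability of each $G_{e}$ makes each $\ell^{1}(G_{e})$ amenable, hence left $\psi$-amenable for all $\psi\in\Delta(\ell^{1}(G_{e}))$; since every $\phi\in\Delta(\ell^{1}(S))$ restricts to a character of exactly one factor $\ell^{1}(G_{e})$, one then wants to transfer this back to obtain $M\in\ell^{1}(S)^{**}$ with $aM=\phi(a)M$ and $\tilde{\phi}(M)=1$, after which the map $\eta(a)=\phi(a)M\otimes M$ is a left $\phi$-biflat structure for $\ell^{1}(S)$, exactly as in the proof of Theorem~\ref{dual}. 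The step I expect to be the main obstacle is precisely this transfer: a $\psi$-mean on the quotient $\ell^{1}(G_{e})$ need not lift to a $\phi$-mean on $\ell^{1}(S)$, since left $\phi$-amenability does not in general pass upward along a quotient map. I would handle it either by a direct computation with the semilattice-of-groups decomposition (checking that the kernel of $\ell^{1}(S)\to\ell^{1}(G_{e})$ is benign enough for the lift to go through) or by quoting a ready-made left $\phi$-amenability criterion for $\ell^{1}(S)$ from the literature; either way $(iii)\Rightarrow(ii)$ follows and the cycle closes.
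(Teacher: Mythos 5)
Your overall architecture coincides with the paper's: Theorem~\ref{dual} handles $(i)\Leftrightarrow(ii)$, Lemma~\ref{main} plus character theory gives $(ii)\Rightarrow(iii)$, and Ramsden's theorem \cite[Theorem 3.7]{rams} supplies $(iii)\Leftrightarrow(iv)$. Two local choices differ harmlessly from the paper's: for $(ii)\Rightarrow(iii)$ you realize $\ell^{1}(G_{e})$ as a quotient of $\ell^{1}(S)$ and push left $\phi$-amenability forward, whereas the paper uses Ramsden's decomposition $\ell^{1}(S)\cong\ell^{1}\text{-}\oplus_{e\in E(S)}\ell^{1}(G_{e})$ (\cite[Theorem 2.16]{rams}) to see each $\ell^{1}(G_{e})$ as a closed ideal and restricts via \cite[Lemma 3.1]{kan}; and you assert the left approximate identity of $\ell^{1}(S)$ as ``standard,'' whereas the paper extracts it from that same decomposition (each summand is unital, so finite sums of the identities form an approximate identity). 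You should invoke the decomposition explicitly: it is the structural fact doing the work everywhere, and your write-up never mentions it.

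The genuine gap is the transfer step in $(iii)\Rightarrow(ii)$, which you correctly flag as the main obstacle but leave unexecuted. It closes precisely because of the point above: by \cite[Theorem 2.16]{rams} the algebra $\ell^{1}(G_{e})$ is not merely a quotient of $\ell^{1}(S)$ but a complemented closed ideal with $\ell^{1}(G_{e})\cdot\ell^{1}(G_{f})=\{0\}$ for $e\neq f$, so every $\phi\in\Delta(\ell^{1}(S))$ is nonzero on exactly one summand $\ell^{1}(G_{e})$. Amenability of $G_{e}$ yields a left $\phi|_{\ell^{1}(G_{e})}$-mean $m\in\ell^{1}(G_{e})^{**}$, and its image $M$ under the bitranspose of the inclusion satisfies $aM=\phi(a)M$ (because $ab=P_{e}(a)b$ for $b\in\ell^{1}(G_{e})$, where $P_{e}$ is the coordinate projection) and $\tilde{\phi}(M)=\widetilde{\phi|_{\ell^{1}(G_{e})}}(m)=1$; equivalently, the ``if and only if'' form of \cite[Lemma 3.1]{kan}, which the paper already cites for the other direction, hands you the left $\phi$-mean on $\ell^{1}(S)$ directly. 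With $M$ in hand, your map $\eta(a)=\phi(a)M\otimes M$ does give left $\phi$-biflatness as claimed. Note that the paper itself never proves $(iii)\Rightarrow(ii)$ directly; it closes the cycle as $(iii)\Rightarrow(iv)\Rightarrow(i)\Rightarrow(ii)$ via \cite[Theorem 3.7]{rams}, so your direct route, once repaired as above, is in fact the more self-contained of the two.
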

\begin{proof}
$(i)\Rightarrow(ii)$ 
	Suppose that $\ell^{1}(S)^{**}$ is left
	$\phi$-biflat for all $\phi\in\Delta(\ell^{1}(S))$. By \cite[Theorem
	2.16]{rams}, $\ell^{1}(S)\cong\ell^{1}-\oplus_{e\in
		E(S)}\ell^{1}(G_{e})$. Since each $\ell^{1}(G_{e})$ has an identity, $\ell^{1}(S)\cong\ell^{1}-\oplus_{e\in
		E(S)}\ell^{1}(G_{e})$ has  an approximate identity. Applying Theorem \ref{dual} gives that  $\ell^{1}(S)$ is left $\phi$-biflat.
	
	$(ii)\Rightarrow(iii)$ 
	Suppose that $\ell^{1}(S)$ is left $\phi$-biflat for each $\phi\in\Delta(\ell^{1}(S))$. Since $\ell^{1}(S)\cong\ell^{1}-\oplus_{e\in
		E(S)}\ell^{1}(G_{e})$ has  an approximate identity, Lemma \ref{main} implies that  $\ell^{1}(S)$ is left $\phi$-amenable for each $\phi\in\Delta(\ell^{1}(S))$. We know that each $\ell^{1}(G_{e})$ is a closed ideal in $\ell^{1}(S)$, so every non-zero multiplicative linear functional $\phi\in\Delta(\ell^{1}(G_{e}))$ can be extended to $\ell^{1}(S)$. Thus by \cite[Lemma 3.1]{kan} left $\phi$-amenability of  $\ell^{1}(S)$ implies that each $\ell^{1}(G_{e})$ is left $\phi$-amenable. Using \cite[Corollary 3.4]{Ala} each $G_{e}$ is amenable.
	
$(iv)\Rightarrow(i)$ 
It is clear by \cite[Theorem 3.7]{rams}.
\end{proof}
%------------------------------------------------------------------------------------------------------------------------------------------
\begin{small}

\end{small}
\end{document}